\newcommand*{\Mscr}{\mathcal M}
\newcommand*{\Nscr}{\mathcal N}
\newcommand*{\R}{\mathbb R}
\newtheorem{cor}{Corollary}[section]
\newtheorem{prop}{Proposition}[section]
\newtheorem{theorem}{Theorem}[section]
\newtheorem{remark}{Remark}[section]
\newtheorem{example}{Example}[section]
\begin{document}

\title[Hadamard formula and Rayleigh-Faber-Krahn inequality for nonlocal problems]
{The Hadamard formula and the Rayleigh-Faber-Krahn inequality for nonlocal eigenvalue problems}

\author[R. D. Benguria, M. C. Pereira and M. S\'aez]{Rafael D. Benguria, Marcone C. Pereira and Mariel S\'aez}

\address{Rafael D. Benguria
\hfill\break\indent Instituto de F\'isica, P. Universidad Cat\'olica de Chile,
\hfill\break\indent Av.~Vicu\~na Mackenna 4860, Macul, 7820436, Santiago,   Chile.}
\email{{\tt rbenguri@fis.puc.cl} \hfill\break\indent {\it
Web page: }{\tt www.fis.puc.cl/$\sim$rbenguri/}}

\address{Marcone C. Pereira
\hfill\break\indent Dpto. de Matem{\'a}tica Aplicada, IME,
Universidade de S\~ao Paulo, \hfill\break\indent Rua do Mat\~ao 1010, 
S\~ao Paulo - SP, Brazil. } \email{{\tt marcone@ime.usp.br} \hfill\break\indent {\it
Web page: }{\tt www.ime.usp.br/$\sim$marcone}}

\address{Mariel S\'aez
\hfill\break\indent Facultad de Matem\'atica, P. Universidad Cat\'olica de Chile,
\hfill\break\indent Av. Vicu\~na Mackenna 4860, 690444 Santiago, Chile. } 
\email{{\tt mariel@mat.puc.cl} \hfill\break\indent {\it
Web page: }{\tt http://www.mat.uc.cl/$\sim$mariel/}}

\keywords{spectrum, nonlocal equations, Dirichlet problem, boundary perturbation.\\
\indent 2010 {\it Mathematics Subject Classification.} Primary 45C05; Secondary 45A05.}

\begin{abstract} 
In this paper we obtain a Hadamard type formula for simple eigenvalues and an analog to the Rayleigh-Faber-Krahn inequality for a class of nonlocal eigenvalue problems. 
Such class of equations include among others, the classical nonlocal problems with Dirichlet and Neumann conditions. The Hadamard formula is computed allowing domain perturbations given by embeddings of $n$-dimensional Riemannian manifolds (possibly with boundary) of finite volume while the Rayleigh-Faber-Krahn inequality is shown by rearrangement techniques. 
\end{abstract}

\maketitle

\section{Introduction}
\label{Sect.intro}
\setcounter{equation}{0}

There are many works in the literature which connect the shape of a region to the eigenvalues and eigenfunctions of a given operator.  
For instance, the minimization of eigenvalues (or combination of them) has attracted a lot of attention since the early part of the twentieth century. 
As far as we know, this issue first came out in the famous book of Rayleigh entittled \emph{The theory of sound} \cite{Ray} where he conjectured that 
the disk should minimize the first Dirichlet eigenvalue of the Laplacian among all open sets of given measure.

In the 1920's, Rayleigh's conjecture was simultaneously proved by Faber \cite{Faber} and Krahn \cite{Krahn} using rearrangement techniques. 
Such result is called Rayleigh-Faber-Krahn inequality being one of the most famous isoperimetric inequalities. Naturally similar questions have been
investigated for other eigenvalues as well as for other operators. For instance, we can mention the Payne-P\'olya-Weinberger isoperimetric inequality for the
quotient of the first two Dirichlet eigenvalues of the Laplacian \cite{AsB}; the Szeg\"o-Weinberger inequality, which is an isoperimetric inequality for the 
first nontrivial Neumann eigenvalue of the Laplacian \cite{SW1, SW2}; and \cite{AsB2} where the Rayleigh's conjecture has been considered for the clamped plate. 
For other examples and a more complete bibliography about these issues, we refer to the following surveys \cite{Beng,Henrot0,Henrot}.

Notice that the importance of such kinds of results in analysis, calculus of variations and applied mathematics is self-evident. 
Therefore, the development of more general techniques and approaches to deal with the optimization of functions depending on
the shape of the domains and the eigenvalues of a given operator are required.   
In this context, the rate of change of simple eigenvalues play an essential role and it has been studied since the pioneering work of 
Hadamard \cite{Hada} who first computed the domain derivative of a simple Laplace eigenvalue under Dirichlet boundary conditions in 1908.

Since then, the Hadamard formula has been generalized in a number of significant ways. Such generalizations include the use of 
Neumann and Robin boundary conditions, multiple eigenvalues, and second order variations for a large class of differential and integral operators. 
Among many references, we cite the monographs \cite{Henrot3, Henry, Jimbo, LP} and the recent works \cite{NM, PG, ALau, MPer}, 
all of them concerned with boundary perturbation problems to differential equations and their applications to eigenvalue problems. 
In particular, we mention \cite{Oz} where a proof of the Rayleigh-Faber-Krahn inequality is obtained as a consequence of the analysis of the 
Hadamard formula for the first eigenvalue.

In this work, we study a class of nonlocal eigenvalue problems with non-singular kernels in $n$-dimensional Riemannian manifolds of finite volume.
Our main goal is two fold. We obtain a Hadamard type formula for simple eigenvalues, and an analog of the Rayleigh-Faber-Krahn inequality.
The Hadamard formula is computed in Section \ref{deriS} using the approach developed in \cite{Henry} to deal with boundary perturbation problems. 
The analog of the Rayleigh-Faber-Krahn inequality is obtained in Section \ref{iso} for open bounded sets in $\R^n$ as a direct consequence of 
the rearrangement techniques and the Riesz rearrangement inequality shown respectively in \cite{Beng} and \cite{WB}.

\section{Our nonlocal eigenvalue problem} \label{pre}

We consider an  $n$-dimensional Riemannian manifold $(\Mscr ,g)$ (possibly with boundary) of finite volume and the following nonlocal eigenvalue problem 
\begin{equation} \label{eigeneq}
\begin{gathered}
a_\Mscr (x) u(x) - \int_\Mscr J(x,y) u(y) dy = \lambda(\Mscr ) \, u(x), \quad x \in \Mscr \end{gathered} 
\end{equation}
for some unknown value $\lambda(\Mscr)$ where $a_\Mscr : \overline \Mscr \mapsto \R$ is assumed to be a continuous function, and $J$ is a non-singular kernel satisfying 
$$
{\bf (H)} \qquad 
\begin{gathered}
J \in \mathcal{C}(\Mscr \times \Mscr, \R) \textrm{ is a nonnegative, symmetric function ($J(x,y)=J(y,x)$) }  \\
\textrm{ with } J(x,x)>0.
\end{gathered}
$$
 We also assume that $\int_\Mscr J(x,y)dy<\infty$.
\begin{remark}
Here, $dy$ refers to the measure on the manifold, which in coordinates is equivalent to $\sqrt{g}(x) dx$ and 
$g$ is the determinant of the matrix $g_{ij}$.
\end{remark}

Notice that analysing the spectral properties of \eqref{eigeneq} is equivalent to study the spectrum of 
the linear operator $\mathcal{B}_\Mscr : L^2(\Mscr) \mapsto L^2(\Mscr)$ given by 
\begin{equation} \label{Bdef}
\mathcal{B}_\Mscr u(x) = a_\Mscr(x) u(x) - \int_\Mscr J(x,y) u(y)\, dy, \quad x \in \Mscr.
\end{equation}
See that $\mathcal{B}_\Mscr$ is the difference of the multiplication operator $a_\Mscr $, which maps 
$u(x) \mapsto a_\Mscr(x) u(x)$,
and the integral operator $\mathcal{J}_{\Mscr}: L^2(\Mscr) \mapsto L^2(\Mscr)$ given by  
\begin{equation} \label{opJ}
\mathcal{J}_\Mscr u (x) = \int_\Mscr J(x,y) u(y) dy, \quad x \in \Mscr
\end{equation}
which is self-adjoint and compact by \cite[Propositions 3.5 and 3.7]{AS} since $\Mscr$ is a measurable metric space.

The prototype of the nonlocal equation given by $\mathcal{B}_\Mscr$ is the Dirichlet problem which is defined by $a_\Mscr(x) \equiv 1$. 
For instance, if one takes $\Mscr =\Omega\subset \R^n$, $J(x,y) = J(|x-y|)$, and assume $u(x) \equiv 0$ in $\R^n \setminus \Omega$ 
with $\int_{\R^n} J(z) dz = 1$, the operator $\mathcal{B}_\Omega$ becomes 
$$
\mathcal{B}_\Omega u(x) = \int_{\R^n} J(|x-y|) (u(x) - u(y)) \, dy, \quad x \in \Omega.
$$
Notice that in this case, $\Omega^c = \R^n \setminus \Omega$ is a hostile surrounding since the particles (whose density is set by $u$) die when they land in $\Omega^c$.
As observed, for instance in \cite{ElLibro, Fife}, this is a nonlocal analog to the Laplace operator with Dirichlet boundary condition in bounded open sets of $\R^n$.

On the other hand, if we take $a_\Omega(x) = \int_\Omega J(|x-y|) \, dy$, 
we get a nonlocal analog to the Laplacian with Neumann boundary condition
$$
\mathcal{B}_\Omega u(x) = \int_{\Omega} J(|x-y|) (u(x) - u(y)) \, dy, \quad x \in \Omega.
$$
In this case the particle can just jump inside of $\Omega$ living in an isolated surrounding. 
As expected, under this Neumann condition, the constant function satisfies equation \eqref{eigeneq} whenever one takes $\lambda(\Omega) = 0$.  

According to \cite{LCW}, nonlocal diffusion equations such as \eqref{eigeneq} were used early in population genetics models, 
see for instance \cite{Kimura}. In Ecology, Othmer et~al. \cite{Oth} were the first authors to introduce a jump process to model the dispersion of individuals, 
which later, was generalised by Hutson et~al. \cite{HMMV} associating $J$ to a radial probability density. 

Actually, several continuous models for species and human mobility have been proposed using such 
nonlocal equations, in order to look for more realistic dispersion models \cite{XF, BC, YY, LMS}.
Besides the applied models with such kernels, the mathematical interest is mainly due to the fact that, in general, 
there is no regularizing effect and therefore no general compactness tools are available.

\subsection*{The spectrum of $\mathcal{B}_\Mscr$}

It is known from \cite[Theorem 3.24]{AS} (see also \cite[Theorem 2.2]{LCW} for open bounded sets $\Mscr=\Omega \subset \R^n$) 
that the spectrum set $\sigma(\mathcal{B}_\Mscr)$ of $\mathcal{B}_\Mscr$ satisfies 
\begin{equation} \label{eigenexp}
\sigma(\mathcal{B}_\Mscr) = {\rm R}(a_\Mscr I) \cup \{ \lambda_n(\Mscr) \}_{n=0}^l 
\end{equation}
for some $l \in \{0, 1, ..., \infty \}$ where ${\rm R}(a_\Mscr I)$ denotes de range of the map $a_\Mscr I$ and $\lambda_n(\Mscr)$ are the eigenvalues of $\mathcal{B}_\Mscr$ with finite multiplicity.  
Also, the essential spectrum of $\mathcal{B}_\Mscr$ is given by
$$\sigma_{ess}(\mathcal{B}_\Mscr) = [m, M]$$ 
where
$$
m = \min_{x \in \overline \Mscr} a_\Mscr(x) \quad \textrm{ and } \quad M = \max_{x \in \overline \Mscr} a_\Mscr(x).
$$

As a consequence of the characterization \eqref{eigenexp}, we notice that the eigenfunctions of $\mathcal{B}_\Mscr$ 
possess the same regularity of the functions $J$ and $a_\Mscr$. In fact, for all $x \in \Mscr$, one has 
\begin{equation} \label{eq168}
\mathcal{B}_\Mscr u(x) = \lambda(\Mscr) u(x) \iff ( a_\Mscr(x) - \lambda(\Mscr) ) u(x) = \int_\Mscr J(x,y) u(y) \, dy. 
\end{equation}
On the other hand,  the convolution-type operator  $(J * u)(x) = \int_\Mscr J(x,y) u(y) \, dy \in \mathcal{C}^k(\overline \Mscr)$ 
whenever $J(\cdot, y) \in \mathcal{C}^k(\Mscr)$ for every $y\in \Mscr$ and $u \in L^1(\Mscr)$.
Therefore, if $\lambda(\Mscr)$ is an eigenvalue of $\mathcal{B}_\Mscr$ with corresponding eigenfunction $u$, 
we obtain from \eqref{eigenexp} that $\lambda(\Mscr) \in [m, M]^c$ implying that $a_\Mscr - \lambda(\Mscr) \neq 0$ in $\Mscr$.
Consequently, we get from \eqref{eq168} that 
$$
u \in \mathcal{C}^k(\overline \Mscr) \quad \textrm{ whenever } \quad J (\cdot, y) \textrm{ and } a_\Omega \textrm{ are } \mathcal{C}^k\textrm{-functions}  \textrm{ for every fixed } y\in \Mscr
$$
for $k=0, 1, 2 ...$

Under appropriate conditions, the existence of the principal eigenvalue of $\mathcal{B}_\Mscr$ is guaranteed by \cite[Theorem 2.1]{LCW}.
Recall that the principal eigenvalue of a linear and bounded operator is the minimum of the real part of the spectrum which is simple, isolated and it is associated with
 a continuous and strictly positive eigenfunction.

\section{Hadamard formula for simple eigenvalues} \label{deriS}

Now let us perturb simple eigenvalues of the operator $\mathcal{B}_{\Mscr}$ computing derivatives with respect to several kinds of  variations of 
the manifold $\Mscr$. In the particular case of $\Mscr=\Omega \subset \R^n$ our approach agrees with the one introduced in \cite{Henry} 
for perturbing a fixed domain $\Omega$ by diffeomorphisms. As a consequence, we extend the expressions obtained to the domain derivative for simple eigenvalues given in \cite{RM, GR}.

Let $(\Mscr, g_\Mscr)$ and $(\Nscr,g_\Nscr)$ be  $\mathcal{C}^1$-regular manifolds ($\Mscr,$ possibly with boundary). Assume in addition that $\overline{\Mscr}$ is compact.
If $h:\Mscr \mapsto \Nscr$ is a $\mathcal{C}^1$ imbedding, i.e., a diffeomorphism to its image, we set the composition map $h^*$ (also called the pull-back) by 
$$
h^* v(x) = (v \circ h)(x), \quad x \in \Mscr,
$$
when $v$ is any given function defined on $h(\Mscr)$.  The metric on $\Nscr$ induces the pullback metric on $\Mscr$ through $h$
 as follows:  for $u,\, v\in T_{h^{-1}(x)} \Mscr$ we have $h^* g_\Nscr(u,v)= g_\Nscr(dh_x(u), dh_x(v))$.
It is not difficult to see that $h^*: L^2(h(\Mscr), g_\Nscr  ) \mapsto L^2(\Mscr, h^*g_\Nscr )$ is an isomorphism with inverse 
$(h^*)^{-1} = (h^{-1})^*$.

We assume that $\Nscr$ has a Riemannian metric $g_\Nscr$ and we denote by $g_h=h^*g_\Nscr $ the metric on $\Mscr $ 
induced by the embedding $h$.  For instance, if $\Mscr =\Omega\subset \R^n=\Nscr $ then the metric  $h^*g_\Nscr$  
is given by $g_{ij}=\frac{\partial h}{\partial x_i} \cdot \frac{\partial h}{\partial x_j}$ and in particular, if $h=id_{\R^n}$ in 
the  interior of $\Omega$ the metrics in $\Mscr $ and $\Nscr$ agree in that set.

In general, for any embedding $h$ we can consider  the operator  
\begin{equation} \label{Jh}
\begin{gathered}
\left( \mathcal{B}_{h(\Mscr)} v \right)(y)  =  (a_{h(\Mscr)} \circ h)(x) (v \circ h)(x) - \left( \mathcal{J}_{h(\Mscr)} v \right) (h(x)) \\
= ( h^* a_{h(\Mscr)})(x) \, (h^* v)(x) - \left( h^* \mathcal{J}_{h(\Mscr)} v \right) (x)
\end{gathered}
\end{equation}
if $y = h(x)$ for $x \in \Mscr$, where $a_{h(\Mscr)}: h(\Mscr) \mapsto \R$ is assumed to be a continuous function in 
$\overline{h(\Mscr)}$ for any isomorphism $h$. Notice that $\mathcal{B}_{h(\Mscr)}: L^2(h(\Mscr), g_\Nscr) \mapsto L^2(h(\Mscr), g_\Nscr)$ 
is a self-adjoint operator for any $h$ as is  the operator $\mathcal{J}_{h(\Mscr)}$.

On the other hand, we can use the pull-back operator $h^*$ to consider 
$$h^* \mathcal{B}_{h(\Mscr)} {h^*}^{-1}: L^2(\Mscr) \mapsto L^2(\Mscr)$$ 
defined by $h^* \mathcal{B}_{h(\Mscr)} {h^*}^{-1} u(x) = \mathcal{B}_{h(M)} \left( u \circ h \right) \left( h^{-1}(x) \right)$. Hence,
\begin{equation} \label{hJh}
h^* \mathcal{B}_{h(\Mscr)} {h^*}^{-1} u(x) = ( h^* a_{h(\Mscr)})(x) \, u(x) - \left( h^* \mathcal{J}_{h(\Mscr)} {h^*}^{-1} u \right) (x) , \quad \forall x \in \Mscr.
\end{equation}

As it is known, expressions \eqref{Jh} and \eqref{hJh} are the customary way to describe deformations or motions of regions. 
Equation \eqref{Jh} is called the Lagrangian description, and \eqref{hJh} the Eulerian one. 
The latter is written in fixed coordinates while the Lagrangian is not. 

Due to \eqref{Jh} and \eqref{hJh}, it is easy to see that 
\begin{equation} \label{eq820}
\begin{gathered}
h^* \mathcal{B}_{h(\Mscr)} {h^*}^{-1} u(x) = \mathcal{B}_{h(\Mscr)} v(y) \quad \textrm{ and } \quad
h^* \mathcal{J}_{h(\Mscr)} {h^*}^{-1} u(x) = \mathcal{J}_{h(\Mscr)} v(y)
\end{gathered}
\end{equation}
whenever $y=h(x)$ and $v(y) = (u \circ h^{-1})(y) = {h^*}^{-1} u(y)$ for $y \in h(\Mscr)$.

Moreover, we have $\mathcal{B}_{h(\Mscr)} v(y) = \lambda v(y)$ for $y \in h(\Mscr)$ and some value $\lambda$, if and only if, 
$$h^* \mathcal{B}_{h(\Mscr)} {h^*}^{-1} u(x) = \lambda u(x), \quad \forall x \in \Mscr,$$ 
with $v(y) = {h^*}^{-1} u(y)$. Hence, as $\mathcal{B}_{h(\Mscr)}$ 
is a self-adjoint operator for any imbedding $h$, we obtain that the spectrum of $h^* \mathcal{B}_{h(\Mscr)} {h^*}^{-1}$ is also a subset of the real line. We have the following: 
\begin{prop} \label{peigenh}
Let $h:\Mscr \mapsto \Nscr$ be an imbedding. 

Then, $\sigma\left(h^* \mathcal{B}_{h(\Mscr)} {h^*}^{-1}\right) = \sigma\left(\mathcal{B}_{h(\Mscr)}\right) \subset \R$ where $\sigma\left(\mathcal{B}_{h(\Mscr)}\right)$ is given by \eqref{eigenexp}. 
More precisely, $\lambda \in \R$ is an eigenvalue of $\mathcal{B}_{h(\Mscr)}$, if and only if, is an eigenvalue of $h^* \mathcal{B}_{h(\Mscr)} {h^*}^{-1}$. Also, 
$$
\sigma_{ess}\left(h^* \mathcal{B}_{h(\Mscr)} {h^*}^{-1}\right) = \sigma_{ess} \left(\mathcal{B}_{h(\Mscr)}\right).
$$
\end{prop}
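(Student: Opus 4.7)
The plan is to observe that the two operators are related by conjugation with the bounded invertible pull-back $h^{*}$, and to invoke the standard fact that similarity via a bounded invertible map preserves the spectrum, the point spectrum (with a bijection of eigenspaces), and the essential spectrum.

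\textbf{Step 1: Set up the similarity.} Since $h:\Mscr\to \Nscr$ is a $\mathcal{C}^{1}$ imbedding and we endow $\Mscr$ with the pull-back metric $g_{h}=h^{*}g_\Nscr$, the change-of-variables formula shows that $h^{*}:L^{2}(h(\Mscr),g_\Nscr)\to L^{2}(\Mscr,g_{h})$ is an isometric isomorphism with bounded inverse $(h^{*})^{-1}=(h^{-1})^{*}$, as already noted before the statement. The conjugate $T:=h^{*}\mathcal{B}_{h(\Mscr)}(h^{*})^{-1}$ is therefore a bounded self-adjoint operator on $L^{2}(\Mscr,g_{h})$ that is similar (indeed unitarily equivalent) to $\mathcal{B}_{h(\Mscr)}$.

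\textbf{Step 2: Equality of spectra and eigenvalue correspondence.} For any $\lambda\in\R$, write
\begin{equation}
T-\lambda I=h^{*}\bigl(\mathcal{B}_{h(\Mscr)}-\lambda I\bigr)(h^{*})^{-1}.
\end{equation}
Since $h^{*}$ is a bounded bijection with bounded inverse, $T-\lambda I$ is invertible on $L^{2}(\Mscr,g_{h})$ if and only if $\mathcal{B}_{h(\Mscr)}-\lambda I$ is invertible on $L^{2}(h(\Mscr),g_\Nscr)$, giving $\sigma(T)=\sigma(\mathcal{B}_{h(\Mscr)})$. Furthermore, the map $v\mapsto u=h^{*}v$ is a bijection between the $\lambda$-eigenspace of $\mathcal{B}_{h(\Mscr)}$ and that of $T$: a direct computation using \eqref{eq820} yields $\mathcal{B}_{h(\Mscr)}v=\lambda v\iff T(h^{*}v)=\lambda(h^{*}v)$. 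The containment $\sigma(\mathcal{B}_{h(\Mscr)})\subset\R$ follows from the self-adjointness of $\mathcal{B}_{h(\Mscr)}$, and the structural description \eqref{eigenexp} is then inherited by $T$ through this bijection of eigenspaces.

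\textbf{Step 3: Essential spectrum.} Recall that $\lambda\in\sigma_{ess}(A)$ iff $A-\lambda I$ is not Fredholm (equivalently, by Weyl's criterion for bounded self-adjoint operators, there exists a singular Weyl sequence). The Fredholm property is preserved under conjugation by a bounded invertible operator: if $(\mathcal{B}_{h(\Mscr)}-\lambda I)$ has a Weyl sequence $\{v_{k}\}\subset L^{2}(h(\Mscr),g_\Nscr)$ with $\|v_{k}\|=1$, $v_{k}\rightharpoonup 0$ and $(\mathcal{B}_{h(\Mscr)}-\lambda I)v_{k}\to 0$, then $u_{k}:=h^{*}v_{k}$ is a Weyl sequence for $T-\lambda I$ because $h^{*}$ is an isometric isomorphism, and hence maps norm-one sequences to norm-one sequences, preserves weak convergence, and intertwines the two operators. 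The converse is identical using $(h^{*})^{-1}$. Therefore $\sigma_{ess}(T)=\sigma_{ess}(\mathcal{B}_{h(\Mscr)})$.

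I do not foresee a genuine obstacle: the whole statement is a formal consequence of similarity by a bounded invertible map. The only point that requires care is keeping track of the two distinct $L^{2}$ spaces (with metrics $g_\Nscr$ on $h(\Mscr)$ and $g_{h}$ on $\Mscr$) so that $h^{*}$ is genuinely an isometric isomorphism; once this is fixed the three assertions follow from completely standard operator-theoretic facts.
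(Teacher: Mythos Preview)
Your proof is correct. The treatment of the eigenvalue correspondence and of the containment $\sigma(\mathcal{B}_{h(\Mscr)})\subset\R$ is the same as the paper's. Where you diverge is in the handling of the essential spectrum: rather than invoking the abstract invariance of $\sigma_{ess}$ under similarity via Weyl sequences (or Fredholmness), the paper computes both essential spectra explicitly. Since $h^{*}\mathcal{B}_{h(\Mscr)}(h^{*})^{-1}$ is again a multiplication operator (by $h^{*}a_{h(\Mscr)}$) minus a compact operator, the structural description \eqref{eigenexp} applies to it directly, yielding $\sigma_{ess}\bigl(h^{*}\mathcal{B}_{h(\Mscr)}(h^{*})^{-1}\bigr)=[m_{h^{*}},M_{h^{*}}]$ with $m_{h^{*}}=\min_{x\in\overline{\Mscr}}a_{h(\Mscr)}(h(x))$ and $M_{h^{*}}=\max_{x\in\overline{\Mscr}}a_{h(\Mscr)}(h(x))$; since $h$ is a bijection onto $h(\Mscr)$, these coincide with $m_{h}$ and $M_{h}$. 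Your argument is cleaner and more general (it uses nothing about the specific form of $\mathcal{B}$), whereas the paper's argument is more concrete and simultaneously identifies the essential spectrum as the interval $[m_{h},M_{h}]$. A minor further difference: your Step~2 obtains the full spectrum equality directly from the similarity relation, while the paper reaches it more indirectly, through the eigenvalue bijection together with the decomposition \eqref{eigenexp}.
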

\begin{proof}
 As $\mathcal{B}_{h(\Mscr)}$ is a self-adjoint operator in $L^2(h(\Mscr), g_h) $, 
 we have that $\sigma\left(\mathcal{B}_{h(\Mscr)}\right) \subset \R$. Also, we know from relationship \eqref{eq820} 
 that a value $\lambda$ is an eigenvalue of $\mathcal{B}_{h(\Mscr)}$, if and only if, is an eigenvalue of 
$h^* \mathcal{B}_{h(\Mscr)} {h^*}^{-1}$. 
Thus, $\sigma\left(h^* \mathcal{B}_{h(\Mscr)} {h^*}^{-1}\right) = \sigma\left(\mathcal{B}_{h(\Mscr)}\right) \subset \R$ with $\sigma\left(\mathcal{B}_{h(\Mscr}\right)$ given by \eqref{eigenexp}. 

Now, it follows from \eqref{eigenexp} and expressions \eqref{Bdef} and \eqref{hJh} that 
$$
\sigma_{ess} \left(\mathcal{B}_{h(\Mscr)}\right) = [ m_{h}, M_{h} ]
\quad \textrm{ and } \quad 
\sigma_{ess}\left(h^* \mathcal{B}_{h(\Mscr)} {h^*}^{-1}\right) = [ m_{h^*}, M_{h^*} ]
$$
where
$$
m_h = \min_{y \in \overline{h(\Mscr)}} a_{h(\Mscr)}(y), \quad M_h = \max_{y \in \overline{h(\Mscr)}} a_{h(\Mscr)}(y)
$$
and 
$$
m_{h^*} = \min_{x \in \overline{\Mscr}} a_{h(\Mscr)}(h(x)), \quad M_{h^*} = \max_{x \in \overline{\Mscr}} a_{h(\Mscr)}(h(x)).
$$
As $m_h = m_{h^*}$ and $M_h = M_{h^*}$, the proof is completed. 
\end{proof}

\begin{remark}
Notice that Proposition \ref{peigenh} guarantees that the essential spectrum of $\mathcal{B}_{h(\Mscr)}$ does not change under perturbations given by embeddings $h:\Mscr \mapsto \Nscr$.
\end{remark}

From now on, we consider a family of embeddings $h: \Mscr \times[0,T] \mapsto \Nscr$ 
that depends on a parameter $t$. We  denote the perturbed domain $h(\Mscr, t)$ by $\Mscr_t$ in order to simplify the notation.
We study the differentiability of simple eigenvalues $\lambda(\Mscr_t)$ for $\mathcal{B}_{\Mscr_t}$ with respect to $t$. This corresponds to the
 G\^ateaux derivative with respect to the function $h$. 
 
 We remark that for a function $f:\Nscr\to \R $ it holds that 
  $$\frac{d}{dt}\left(h^*f( x, t)\right)=\frac{d}{dt}\left(f(h(x,t), t)\right)=\langle h^* \nabla f , \frac{\partial h }{\partial t}\rangle+h^* \frac{\partial f }{\partial t},$$
where $\nabla$ denotes de tangential gradient on $\Nscr$. 
Then we denote
\begin{equation} \label{Dt}
D_t = \frac{\partial}{\partial t} - \langle   \frac{\partial h}{\partial t} , \nabla \rangle,
\end{equation} where $ \langle \cdot, \cdot \rangle$ denotes the inner product in $\Nscr$.

If $\Nscr=\R^n$ and $\Mscr=\Omega\subset \R^n$ this quantity can be written in coordinates as 
$$D_t = \frac{\partial}{\partial t} - U(t,x) \cdot \frac{\partial}{\partial x} \quad 
\textrm{ with } \quad U(t,x) = {\frac{\partial h}{\partial x}}^{-1} \frac{\partial h}{\partial t} \; \textrm{ for } x \in \Omega$$
and it is known as the anti-convective derivative $D_t$ in the reference domain $\Omega$.

We denote by $\rm{Diff}^1(\Mscr ) \subset \mathcal{C}^1(\Mscr , \Nscr) $ 
the set of $\mathcal{C}^1$-functions $h:\Mscr  \mapsto \Nscr $ which are embeddings. 
We assume that $\Nscr$ has a Riemannian metric $g_\Nscr$ and we denote by $g_h=h^*g_\Nscr $ the metric on $\Mscr $ 
induced by the embedding $h$.  For instance, if $\Mscr =\Omega\subset \R^n=\Nscr $ then the metric  $h^*g_\Nscr$  
is given by $g_{ij}=\langle \partial_{x_i} h, \partial_{x_j} h \rangle$ and, if the dimension of $\Omega$ is $n$, 
the tangent spaces of $h(\Omega)$ and $\R^n$ agree in the interior and   the volume element  is $|Dh| \,dx$.

Consider   the map 
\begin{eqnarray*}
F: {\rm Diff}^1(\Mscr ) \times \R \times L^2(\Mscr ) & \mapsto & L^2(\Mscr) \times \R \\
(h,\lambda, u) & \mapsto & \left(  \left(h^* \mathcal{B}_{h(\Mscr )} {h^*}^{-1}  - \lambda \right) u, \int_\Mscr  u^2(x) dv_{g_h }  \right).
\end{eqnarray*} 

Here $dv_{g_h}$ is the volume element of the metric on $g_h $.
It is not difficult to see that $\rm{Diff}^1(\Mscr)$ is an open set of $\mathcal{C}^1(\Mscr, \Nscr)$ 
(which denotes the space of $\mathcal{C}^1$-functions from $\Mscr$ into $\Nscr$ whose derivatives extend continuously to the closure $\overline \Mscr$ with the usual supremum norm). 
Hence, $F$ can be seen as a map defined between Banach spaces.

In what follows we will consider that $\Mscr \subset \Nscr $ (perhaps by identifying $\Mscr$ with its image with an initial fixed embedding).

Notice that if $\lambda_0 \in \R$ is an eigenvalue for $\mathcal{B}_{\Mscr}$ for some $u_0 \in L^2(\Mscr)$ with $\int_\Mscr u_0^2(x) \, dx = 1$, 
then $F(i_\Mscr, \lambda_0, u_0) = (0,1)$ where $i_\Mscr \in {\rm Diff}^1(\Mscr)$ denotes the inclusion map of $\Mscr$ into $\Nscr$.
On the other hand, whenever $F(h, \lambda, u) = (0,1)$, we have from Proposition \ref{peigenh} that 
$$
\mathcal{B}_{\Mscr_h} v(y) = \lambda v(y), \quad y \in \Mscr_h, 
\quad \textrm{ with } \quad 
\int_{\Mscr_h} v^2(y) \, dy =1
$$
where $v(y) = (u \circ h^{-1})(y)$ for $y \in \Mscr_h$.
In this way, we can proceed as in \cite{RM,Henry} using the map $F$ to deal with eigenvalues and eigenfunctions of 
$\mathcal{B}_{\Mscr_h}$ and $h^* \mathcal{B}_{\Mscr_h} {h^*}^{-1}$ perturbing the eigenvalue problem to the fixed manifold $\Mscr$ by diffeomorphisms $h$.

\begin{theorem} \label{deri}
Let $\lambda_0$ be a simple eigenvalue of $\mathcal{B}_{\Mscr}$ with corresponding normalized eigenfuction $u_0$ and 
 $J \in \mathcal{C}^1(\Nscr \times \Nscr, \R)$satisfying $(\rm{H})$. 
Also, let us assume that $\Phi: {\rm Diff}^1(\Mscr) \mapsto \mathcal{C}^1(\Mscr)$ given by $\Phi(h)(x) = (h^* a_{\Mscr_h})(x)$, $x \in \Omega$, 
is differentiable as a map defined between Banach spaces. 

Then, there is a neighbourhood $\mathcal{O}$ of the inclusion $i_{\Mscr} \in \mathop{\rm Diff}^1(\Mscr)$, and $\mathcal{C}^1$-functions 
$u_h$ and $\lambda_h$ from $\mathcal{O}$ into $L^2(\Mscr)$ and $\mathbb{R}$ respectively satisfying for all $h \in \mathcal{O}$ that 
\begin{equation} \label{eq260}
h^* \mathcal{B}_{h(\Mscr)} {h^*}^{-1} u_h(x) = \lambda_h u_h(x), \quad x \in \Mscr, 
\end{equation}
with $u_h \in \mathcal{C}^1(\Mscr)$.  

Moreover, $\lambda_h$ is a simple eigenvalue with $(\lambda_{i_{\Mscr}}, u_{i_\Mscr} ) = ( \lambda_0, u_0)$ and the domain derivative 
\begin{equation} \label{dode} \begin{gathered}
\frac{\partial \lambda}{\partial h}(i_\Mscr) V = - \int_{\partial \Mscr} \left(  a_\Mscr(s) - \lambda_0  \right) u_0^2(s) \; \langle V^T , N\rangle (s) \, dS 
+ \int_\Mscr u_0^2(x) \, D^T_t (h^*a_{\Omega_h}) \big|_{t=0}  dx  \\ +   \int_{\Mscr} (\lambda_0-a_0)  u_0^2 (w)  \langle \vec{H}, V^{\perp}\rangle dv_0(w)- \int_{\Mscr} u^2_0(w)  \langle\nabla_{w} a_0(w),  V^{\perp}\rangle dv_0(w),
\end{gathered}\end{equation}
for all $V \in \mathcal{X}^1(\Nscr)$ where $\mathcal{X}^1(\Nscr)$ denotes the set of $\mathcal{C}^1$ vector fields on $\Nscr$ and $D_t^Tf=\frac{\partial f}{\partial t}-\langle V^T, \nabla f\rangle$, $V^T$ is the component of $V$ tangential to $\Mscr$. Note that at the boundary the tangent space of $\Mscr$ splits into vectors that are tangential to $\partial M$ (and to $\Mscr$) and one vector that is normal to $\partial \Mscr$ (and tangential to $\Mscr$).  Then  $N\in T(\Mscr)$ denotes this unitary normal vector that is normal to  $\partial M$.  $\vec{H}$ is the mean curvature vector associated to $\Mscr$ and  $V^\perp$ is the component of $V$ normal to $\Mscr$ .
\end{theorem}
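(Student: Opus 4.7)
The plan has three steps: (i) apply the Implicit Function Theorem (IFT) to the map $F$ to obtain $(\lambda_h, u_h)$ as $\mathcal{C}^1$ branches; (ii) differentiate \eqref{eq260} along a curve $h_t$ with $\partial_t h_t|_0 = V$ and pair against $u_0$ to isolate $\dot\lambda$; (iii) compute the operator derivative explicitly and collect terms to produce \eqref{dode}.

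\textbf{Step 1 (IFT).} I first check that $F$ is $\mathcal{C}^1$ between the indicated Banach spaces, using $J \in \mathcal{C}^1(\Nscr\times\Nscr, \R)$ for the kernel, the hypothesis on $\Phi$ for the multiplication term, and $\mathcal{C}^1$-dependence of the induced volume element $dv_{g_h}$ on $h \in \mathrm{Diff}^1(\Mscr)$ (a standard square-root-of-Gram-determinant calculation). A direct computation gives
\begin{equation}
D_{(\lambda,u)}F(i_\Mscr, \lambda_0, u_0)(\dot\lambda, \dot u) = \Bigl((\mathcal{B}_\Mscr - \lambda_0)\dot u - \dot\lambda\, u_0,\; 2\int_\Mscr u_0 \dot u\, dv_0\Bigr).
\end{equation}
By \eqref{eigenexp}, $\lambda_0 \notin [m,M]$, so $(a_\Mscr - \lambda_0)I$ is boundedly invertible on $L^2(\Mscr)$; combined with compactness of $\mathcal{J}_\Mscr$ this makes $\mathcal{B}_\Mscr - \lambda_0$ Fredholm of index $0$ with kernel $\R u_0$ (by simplicity) and range $u_0^\perp$ (by self-adjointness). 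A bordered-operator argument then shows the above partial derivative is an isomorphism, and the IFT produces $\mathcal{C}^1$ branches $\lambda_h, u_h$ on a neighbourhood $\mathcal{O}$ of $i_\Mscr$ with $(\lambda_{i_\Mscr}, u_{i_\Mscr}) = (\lambda_0, u_0)$. Simplicity of $\lambda_h$ for $h$ close to $i_\Mscr$ follows by upper semicontinuity of eigenvalue multiplicity under the continuous perturbation, and $u_h \in \mathcal{C}^1(\Mscr)$ from the regularity remark following \eqref{eq168}.

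\textbf{Step 2 (Isolating $\dot\lambda$).} Given $V \in \mathcal{X}^1(\Nscr)$, pick a $\mathcal{C}^1$ curve $h_t$ with $h_0 = i_\Mscr$ and $\partial_t h_t|_0 = V$. Differentiating \eqref{eq260} at $t=0$ yields $\dot{\mathcal{B}} u_0 + (\mathcal{B}_\Mscr - \lambda_0)\dot u = \dot\lambda u_0$, where $\dot{\mathcal{B}} := \partial_t(h_t^*\mathcal{B}_{\Mscr_t}{h_t^*}^{-1})|_0$. Pairing in $L^2(\Mscr, dv_0)$ against $u_0$, self-adjointness of $\mathcal{B}_\Mscr - \lambda_0$ kills the middle term and $\|u_0\|_{L^2(dv_0)} = 1$ gives $\dot\lambda = \langle \dot{\mathcal{B}} u_0, u_0\rangle$.

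\textbf{Step 3 (Computing $\dot{\mathcal{B}}$ and assembling \eqref{dode}).} In Lagrangian form, after changing variables $y = h(z)$,
\begin{equation}
h^*\mathcal{B}_{h(\Mscr)}{h^*}^{-1} u(x) = (h^*a_{h(\Mscr)})(x)\, u(x) - \int_\Mscr J(h(x), h(z))\, u(z)\, dv_{g_h}(z),
\end{equation}
so $\dot{\mathcal{B}} u_0$ has three sources: (a) the derivative $\partial_t (h_t^* a_{\Mscr_t})|_0$; (b) the two slot-gradients of $J$; (c) the first variation $\partial_t dv_{g_{h_t}}|_0 = (\mathrm{div}_\Mscr V^T + \langle \vec H, V^\perp\rangle)\, dv_0$ from the first-variation-of-volume formula on a submanifold with boundary. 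For (c), pairing with the eigenfunction identity $\int J(x,\cdot)u_0\,dv_0 = (a_\Mscr - \lambda_0)u_0(x)$ yields the mean-curvature term of \eqref{dode} plus a tangential $\mathrm{div}_\Mscr V^T$ piece. For (b), a dummy-variable swap using the symmetry $J(x,z) = J(z,x)$ collapses the two slot-gradient terms into the single integral $2\int u_0 V\cdot \nabla_x[(a_\Mscr-\lambda_0)u_0]\,dv_0$; splitting $V = V^T + V^\perp$ and integrating by parts on $\Mscr$ contributes the boundary integral $-\int_{\partial\Mscr}(a_\Mscr-\lambda_0)u_0^2\langle V^T, N\rangle\,dS$, a $\mathrm{div}_\Mscr V^T$ piece cancelling the one from (c), and a normal contribution which assembles into the $\langle \nabla a_0, V^\perp\rangle$ term. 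For (a), the definition of $D_t^T$ splits $\partial_t(h_t^* a_{\Mscr_t})|_0$ into $D_t^T(h^*a_{\Omega_h})|_0$ plus a $\langle V^T, \nabla a_0\rangle$ piece which cancels the tangential residue of (b). Collecting the survivors yields the four terms of \eqref{dode}.

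\textbf{Main obstacle.} The crux is the orchestrated cancellation in Step 3: the tangential pieces $\langle V^T, \nabla a_0\rangle$ and $\mathrm{div}_\Mscr V^T$ must exactly cancel between (a), (b), (c), leaving behind the boundary integral on $\partial\Mscr$ and the $D_t^T$ summand; the normal pieces from (b) and (c) must separately produce the $\nabla a_0 \cdot V^\perp$ and mean-curvature terms. Symmetry of $J$, the eigenfunction identity, and the first variation of volume must be applied at the right moments, and the sign convention for $\vec H$ tracked carefully. A secondary technical point, needed in Step 1, is the $\mathcal{C}^1$-dependence of $dv_{g_h}$ on $h$, which reduces to smoothness of a Gram-determinant square root.
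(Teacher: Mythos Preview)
Your proposal is correct and follows the same overall strategy as the paper (IFT for the branches, differentiate the pulled-back eigenvalue equation along a curve of embeddings, use the first variation of volume and the symmetry of $J$, integrate by parts). The organization, however, differs in two useful ways. First, in Step~2 you invoke self-adjointness of $\mathcal{B}_\Mscr-\lambda_0$ at the outset to obtain $\dot\lambda=\langle\dot{\mathcal{B}}u_0,u_0\rangle$ directly, whereas the paper carries the terms $\partial_t u_t$ and $D_t u$ through the computation and cancels them only at the end (via the identity $(a_0-\mathcal{J}_\Mscr)u_0=\lambda_0 u_0$). Second, in Step~3 you differentiate the Lagrangian form $\int_\Mscr J(h(x),h(z))u(z)\,dv_{g_h}(z)$ and split the result into the three sources (a), (b), (c); the paper instead passes through the anti-convective derivative $D_t$ and a Reynolds-transport computation of $\partial_t\int_{h(t,\Mscr)}J(y,w)v(t,w)\,dv_g(w)$, obtaining the boundary term via the divergence theorem rather than by the integration by parts you perform in (b). Your route is a bit more direct; the paper's route makes the appearance of the boundary integral more transparent from the moving-domain viewpoint.

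One point to tighten in your sketch: after the symmetrization in (b) you write the collapsed integral as $2\int u_0\,V\cdot\nabla_x[(a_\Mscr-\lambda_0)u_0]\,dv_0$. The correct object is $-2\int u_0\,V\cdot\nabla_x^{\Nscr}[\mathcal{J}_\Mscr u_0]\,dv_0$, where $\mathcal{J}_\Mscr u_0(x)=\int_\Mscr J(x,z)u_0(z)\,dv_0(z)$ is naturally defined for $x\in\Nscr$. The substitution $\mathcal{J}_\Mscr u_0=(a_\Mscr-\lambda_0)u_0$ is legitimate only for the \emph{tangential} part $V^T\cdot\nabla^{\Mscr}$, since the eigenfunction identity holds on $\Mscr$; for the normal part $V^\perp\cdot\nabla^{\Nscr}$ one must keep $\mathcal{J}_\Mscr u_0$ and argue separately (this is exactly what the paper does, using that $\nabla u_0$ is tangential so that $\langle\nabla_w(\mathcal{J}_\Mscr u_0),V^\perp\rangle$ reduces to $u_0\langle\nabla_w a_0,V^\perp\rangle$). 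With that adjustment the cancellations you describe go through as stated.
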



\begin{proof}
The proof of the existence of the neighbourhood $\mathcal{O} \subset \mathop{\rm Diff}^1(\Mscr)$ and the 
$\mathcal{C}^1$-functions $u_h$ and $\lambda_h$ satisfying \eqref{eq260} is very similar to that one performed in \cite[Lemma 4.1]{RM}. 
As one can see, it is a consequence of the Implicit Function Theorem applied to the map $F$. 
Here, we compute the derivative of $\lambda_h$ at $h=i_\Mscr$. 
For this, it is enough to consider a curve of imbeddings $h(t,x)$ that satisfies $h(0,x)=i_\Mscr$ and $\frac{\partial h}{\partial t} =  V(x)$ for a fixed  vector field $V \in \mathcal{X}^1(\Nscr)$. To simplify the notation, we denote the eigenvalue and eigenfunction on $h(t,\Mscr)$ by $\lambda_t$ and $u_t$ respectively. It follows from 
$$
h(t)^* \mathcal{B}_{h(t,\Mscr)} {h(t)^*}^{-1} u_{t}(x) = \lambda_{t} u_{t}, \quad x \in \Mscr,
$$
that 
\begin{equation} \label{eq960}
\frac{\partial}{\partial t} \left( h(t)^* \mathcal{B}_{h(t,\Mscr)} {h(t)^*}^{-1} u_{t}(x) \right)  \Big|_{t=0}
 = \frac{\partial \lambda_{t}}{\partial t} \Big|_{t=0} u_0  + \lambda_0 \frac{\partial u_{t}}{\partial t} \Big|_{t=0} \quad \textrm{ in } \Mscr.
\end{equation}

Now, we need to compute the derivative of the left-hand side of \eqref{eq960}.
Notice that  
\begin{equation} \label{eq333}
\begin{gathered}
\frac{\partial}{\partial t}  \left( h(t)^* \mathcal{B}_{h(t,\Mscr)} {h(t)^*}^{-1} u_{h(t)} \right) \Big|_{t=0} 
 =  \frac{\partial}{\partial t} \left( h^* a_{h(t,\Mscr)} u_{t}\right) \Big|_{t=0}  \\
\qquad \qquad \qquad \qquad \qquad - \frac{\partial}{\partial t} \left( h(t)^* \mathcal{J}_{h(t,\Mscr)} {h(t)^*}^{-1} u_{t} \right) \Big|_{t=0}
\quad \textrm{ in } \Mscr.
\end{gathered}
\end{equation}

Also, for any function $w:\Mscr\times[0,T) \to \R$ it holds that 
$\frac{\partial}{\partial t} \left( h^* w\right)=  h^*\frac{\partial}{\partial t} w+\langle  h^*\nabla w,   \frac{\partial h}{\partial t} \rangle$. Here  $\nabla$ 
denotes de tangential gradient on $\Nscr$   and $\langle \cdot, \cdot \rangle$ the inner product in $\Nscr$. Then we have

\begin{equation} \label{eq1100}
D_t  \left( h(t)^* \mathcal{J}_{h(t,\Mscr)} {h(t)^*}^{-1} u_{t} \right) 
= h(t)^*\frac{\partial}{\partial t} \left(\mathcal{J}_{h(t,\Mscr)} {h(t)^*}^{-1} u_{t} \right)
\quad \textrm{ in } \Nscr.
\end{equation}
In the case of domains of $\R^n$ this derivative is known from \cite[Lemma 4.1]{RM} under the Dirichlet condition. 
%

Hence, setting $v(t,y) = {h(t)^*}^{-1} u_{t}(y) = u_{t}(h^{-1}(t,y))$, $y \in h(t,\Mscr)$,
we get from \eqref{eq820} 
\begin{eqnarray*}
\frac{\partial}{\partial t} \left(\mathcal{J}_{h(t,\Mscr)} {h(t)^*}^{-1} u_{t} \right) \Big|_{t=0} 
& = & \frac{\partial}{\partial t} \left( \mathcal{J}_{h(t,\Mscr)} v \right) \Big|_{t=0} \\
& = & \frac{\partial}{\partial t} \left( \int_{h(t,\Mscr)} J(y , w) v(t, w) dv_g \right) \Big|_{t=0} \quad \textrm{ for } y \in h(t,\Mscr).
\end{eqnarray*}

To explicitly compute this derivative we recall that  $\left.\frac{dv_g}{dt}\right|_{t=0}=\textrm{tr } (g^{-1} \frac{dg}{dt} (0))\,  dv_0$. 
Since  $g_{ij}(t)=\langle \partial_{x_i} h, \partial_{x_j} h \rangle$ we have that  
$\frac{dg_{ij}}{dt}= \langle \partial_{x_i} h, \partial_{x_j} V\rangle+ \langle \partial_{x_i} V, \partial_{x_j} h\rangle$. Now we denote $V=V^{\perp}+V^T$, 
where $V^{\perp}$ is normal component of $V$ and $V^T$ the tangential one. Then we have  
$$
\left.\frac{dv_g}{dt}\right|_{t=0}=(\textrm{ div}_\Mscr \, V^T +\langle \vec{H}, V^{\perp}\rangle) \, dv_0,
$$ 
where $ \vec{H}$ is the mean curvature 
vector associated to $\Mscr$. To keep in mind the variable that we are using in the computation, we will add a subscript to $\nabla$ (e.g. $\nabla_w J(w,x)$ or $\nabla_x J(w,x)$). Then 
\begin{eqnarray*}
 \frac{\partial}{\partial t} \left( \int_{h(t,\Mscr)} J(y , w) v(t, w) \, dv_g(w) \right) \Big|_{t=0} & = &\int_{\Mscr}  \textrm{ div}_\Mscr \,( J(y , w) v(0, w)  V^T) \, dv_0(w)\\
 &&+ \int_{\Mscr} J(y , w) \frac{d}{dt}v(0, w)\,  dv_0(w) +  \int_{\Mscr} J(y , w) v(t, w) \langle \vec{H}, V^{\perp}\rangle \, dv_0(w)\\ 
 &&+ \int_{\Mscr}  \langle \nabla_{w}( J(y , w) v(0, w)),  V^{\perp} \rangle) \, dv_0(w) \\
  \\&=& \int_{\partial \Mscr}  J(y , w) u_0 (w) \langle V^{T}, N \rangle \,  dS + \int_{\Mscr} J(y , w) D_t u \, dv_0(w)  \\ 
  && +  \int_{\Mscr} J(y , w) u_0  \langle \vec{H}, V^{\perp}\rangle \, dv_0(w)+ \int_{\Mscr}  \langle \nabla_{w}(J(y , w) u_0( w)),  V^{\perp}\rangle  \, dv_0(w),
\end{eqnarray*}
where $N\in T( \Mscr)\cap (T(\partial \Mscr))^{\perp}$ is the unitary normal vector to $\partial M$.  Since $J$ is $\mathcal{C}^1$, the eigenfunctions $u_t$ and their derivatives can be 
continuously extended to the border $\partial \Mscr$.
Hence, $u_t$ possesses trace and the expression above is well defined. Since $u_0$ is a function defined on $\Mscr$ we have 
$\nabla_{\Nscr} u_0( w)= \nabla_{\Mscr} u_0( w)$ and, $\nabla_{\Mscr}u_0( w)$ is tangential to $\Mscr$, then   $\langle\nabla_{\Nscr} u_0( w),  V^{\perp}\rangle=0$. 
We will also denote by $a_0=a_{h(0,\Mscr)}=a_\Mscr$.

Consequently, \eqref{Dt} and \eqref{eq1100} imply  
\begin{equation} \label{eq375}
\begin{gathered}
\frac{\partial}{\partial t} \left( h(t)^* \mathcal{J}_{h(t,\Omega)} {h(t)^*}^{-1} u_{t} \right) \Big|_{t=0} = 
\langle V, \nabla \left( \mathcal{J}_{\Omega} u_0 \right)\rangle  + \mathcal{J}_{\Omega} (D_t u |_{t=0}) \\ 
\qquad \qquad \qquad \qquad \qquad 
+ \int_{\partial \Mscr}  J(y , w) u_0 (w) \langle V^{T}, N \rangle \, dS  +  \int_{\Mscr} J(y , w) u_0(w)\,  \langle \vec{H}, V^{\perp}\rangle \, dv_0(w)
  \\ 
\qquad \qquad \qquad \qquad \qquad 
+ \int_{\Mscr}  u_0( w) \, \langle\nabla_{w} (J(y , w)) ,  V^{\perp}\rangle \, dv_0 (w).
 \end{gathered}
\end{equation}


We get from \eqref{eq960} and \eqref{eq375} that 
\begin{equation} \label{eq350}
\begin{gathered}
\left. \frac{\partial \lambda_{t}}{\partial t} u_0  + \lambda_0 \frac{\partial u_{t}}{\partial t}\right|_{t=0} = 
\frac{\partial}{\partial t} \left( h^* a_{h(t,\Mscr)} \right) \Big|_{t=0} u_0 + a_0 \frac{\partial u_{t}}{\partial t} 
-\langle V, \nabla \left( \mathcal{J}_{\Mscr} u_0 \right)\rangle  - \mathcal{J}_{\Mscr} (D_t u |_{t=0}) \\
\qquad \qquad 
- \int_{\partial \Mscr}  J(y , w) u_0 (w) \langle V^{T}, N \rangle \, dS  -  \int_{\Mscr} J(y , w) u_0  \langle \vec{H}, V^{\perp}\rangle \, dv_0(w)
  \\ 
\qquad \qquad 
- \int_{\Mscr}u_0(w)\,  \langle\nabla_{w}( J(y , w) ),  V^{\perp}\rangle \, dv_0(w).
\end{gathered}
\end{equation}
Thus, multiplying \eqref{eq350} by the normalized eigenfunction $u_0$ and integrating on $\Mscr$, we obtain 
\begin{equation} 
\begin{gathered}
\frac{\partial \lambda_{t}}{\partial t} + \lambda_0 \int_\Mscr \frac{\partial u_{t}}{\partial t} \, u_0 \, dv_0 (x)= 
\int_\Mscr \frac{\partial}{\partial t} \left( h^* a_{h(t,\Mscr)} \right) \Big|_{t=0} u_0^2 \, dv_0(x)
+ \int_\Mscr \mathcal{J}_{\Mscr} u_0 \frac{\partial u_{t}}{\partial t} \, dv_0(x) \\
+ \int_\Mscr ( a_0 - \mathcal{J}_{\Mscr} u_0) \frac{\partial u_{t}}{\partial t}  \, dv_0(x) 
- \int_ \Mscr   \langle V, \nabla \left( \mathcal{J}_{\Mscr}  \,u_0 \right) \rangle \, u_0 (x) \, dv_0(x)  \\
 - \int_\Mscr u_0 \left[ \mathcal{J}_{\Mscr} (D_t u |_{t=0}) + \int_{\partial \Mscr} J(x,z) u_0(z) \, \langle V^T, N\rangle (z) \, dS(z) \right] \, dv_0(x) \\
 -    \int_{\Mscr}\int_{\Mscr} J(x , w) u_0(x) u_0 (w)  \langle \vec{H}, V^{\perp}\rangle (w) \, dv_0(w) \, dv_0(x)\\
-  \int_{\Mscr} \int_{\Mscr} u_0(x) \, u_0(w)\,  \langle\nabla_{w} J(x, w) ,  V^{\perp}\rangle  (w) \, dv_0(w)\, dv_0(x),
\end{gathered}
\end{equation}
which in turn implies 
\begin{equation} \label{eq360}
\begin{gathered}\left.
\frac{\partial \lambda_{t}}{\partial t}  \right|_{t=0}= 
\int_\Mscr \frac{\partial}{\partial t} \left( h^* a_{h(t,\Mscr )} \right) \Big|_{t=0} \, u_0^2 \, dv_0(x) 
\\+ \int_\Mscr \mathcal{J}_{\Mscr } u_0 \frac{\partial u_{t}}{\partial t} \,  dv_0(x) - \int_ \Mscr  u_0 \, \langle V , \nabla \left( \mathcal{J}_{\Mscr} u_0 \right) \rangle \, dv_0(x)  \\
 - \int_\Mscr u_0(x) \left[ \mathcal{J}_{\Mscr} (D_t u |_{t=0}) + \int_{\partial \Mscr} J(x,y) u_0(z) \, \langle V^T , N \rangle (z) \, dS(z) \right] \, dv_0(x) \\
  -    \int_{\Mscr} \mathcal{J}_\Mscr u_0 (w) u_0 (w) \langle \vec{H}, V^{\perp}\rangle  \, dv_0(w) 
-  \int_{\Mscr} u_0(w)  \langle\nabla_{w}( \mathcal{J}_\Mscr u_0(w) ),  V^{\perp}\rangle \, dv_0(w), 
\end{gathered}
\end{equation}
since $(a_0 - \mathcal{J}_{\Mscr}) u_0 = \lambda_0 u_0$ in $\Mscr$. The last two integrals are obtained from the symmetry  $J(x,w) = J(w,x)$, which also implies
\begin{equation} 
\begin{gathered}
\int_\Mscr u_0 \left[ \mathcal{J}_{\Mscr} (D_t u |_{t=0}) +  \langle V , \nabla_x \left( \mathcal{J}_{\Mscr} u_0 \right) \rangle \right] dv_0(x)  \\
= \int_\Mscr \int_\Mscr J(x, w) u_0(x) \left( \frac{\partial u_{t}}{\partial t}(w) - \langle V(w) , \nabla_w u_0(w)\rangle \right) dv_0(w)  dv_0(x) 
+ \int_\Mscr u_0 \langle V ,\nabla_x \left( a_\Omega u_0 - \lambda_0 u_0  \right)\rangle d v_0(x) \\
= \int_\Mscr \frac{\partial u_{t}}{\partial t} \mathcal{J}_{\Mscr} u_0 \, d v_0(x) - \int_\Mscr (a_0 - \lambda_0) u_0  \langle V , \nabla u_0 \rangle dv_0(x) 
+ \int_\Mscr u_0  \langle V,  \nabla \left( a_\Omega u_0 - \lambda_0 u_0  \right) \rangle dv_0(x) \\
= \int_\Mscr \frac{\partial u_{t}}{\partial t} \mathcal{J}_{\Mscr} u_0 \, dv_0(x) + \int_\Mscr u_0^2 \langle V, \nabla  a_0 \rangle d v_0(x) .
\end{gathered}
\end{equation}
Finally we observe
 \begin{equation}  \label{eq360m}
\begin{gathered}\int_{\Mscr} u_0(w)  \langle\nabla_{w}( \mathcal{J}_\Mscr u_0(w) ),  V^{\perp}\rangle dv_0(w)= \int_{\Mscr} u^2_0(w)  \langle\nabla_{w}( a_0(w) ),  V^{\perp}\rangle dv_0(w). 
\end{gathered}
\end{equation}
Here we used that $\nabla u_0$ is tangential to $\Mscr$.

Consequently, we get from \eqref{eq360} that 
\begin{eqnarray*} 
\frac{\partial \lambda_{t}}{\partial t}(0) & = & 
\int_\Mscr \frac{\partial}{\partial t} \left( h^* a_{h(t,\Mscr )} \right) \Big|_{t=0} \, u_0^2 \, dv_g(x)  - \int_\Mscr u_0^2 \langle V, \nabla  a_0 \rangle d v_0(x)  \\
 & & - \int_\Mscr u_0(x) \int_{\partial \Mscr} J(x,y) u_0(z) \, \langle V^T , N \rangle (z) \, dS(z) \, dv_g(x) \\
& &  -    \int_{\Mscr} \mathcal{J}_\Mscr u_0 (w) u_0 (w) \langle \vec{H}, V^{\perp}\rangle  \, dv_g(w) 
-  \int_{\Mscr} u^2_0(w)  \langle\nabla_{w}( a_0(w) ),  V^{\perp}\rangle dv_g(w) \\
& = & \int_\Mscr u_0^2 \, D^T_t (h^*a_{t}) \big|_{t=0} \, dv_t(x) -  \int_{\partial \Mscr} ( a_0 - \lambda_0) u_0^2 \langle V , N\rangle \, dS \\ &&  -    \int_{\Mscr} ( a_0 - \lambda_0) u_0^2 (w) \langle \vec{H}, V^{\perp}\rangle dv_g(w) - \int_{\Mscr} u^2_0(w)  \langle\nabla_{w} a_0(w) ,  V^{\perp}\rangle \, dv_g(w)
\end{eqnarray*}
where  $D_t^Tf=\frac{\partial f}{\partial t}-\langle V^T, \nabla f\rangle$. Observing that $ \mathcal{J}_\Mscr u_0 = (a_0-\lambda_0) u_0$ we complete the proof. 
\end{proof}

\begin{remark} In the case that $\Mscr$ is a open domain of $\R^{n+1}$ (with co-dimension 0). The formula becomes
$$\frac{\partial \lambda}{\partial h}(i_\Omega)  V = - \int_{\partial \Omega} \left(  a_\Omega(s) - \lambda_0  \right) u_0^2(s) \; \langle V, N \rangle \, dS 
+ \int_\Omega u_0^2(x) \, D_t (h^*a_{\Omega_h}) \big|_{t=0}  dx.$$
\end{remark}

Next, we give some preliminary examples setting suitable nonlocal operators computing their Hadamard formula.

\begin{example}[The sphere]{\rm Consider $\Mscr=\mathbb{S}^n$ and $\Nscr=\mathbb{R}^{n+1}$. If we take $a\equiv 0$, then $\vec{H}(p)=\frac{n}{R}p$ and
$$
\frac{\partial \lambda_{t}}{\partial t}(i_\Omega)  V  =  
  \frac{n \lambda_0}{R}   \int_{\mathbb{S}^n}  u^2_0 (w) \langle w, V^{\perp}\rangle \, dv_0(w).
$$
}
\end{example}

\begin{example}[The Dirichlet problem on the upper hemisphere]
{\rm Consider $\Mscr=\mathbb{S}^n_+$ (that is $p\in \mathbb{S}^n$ with $x_{n+1}\geq 0$) and $\Nscr=\mathbb{R}^{n+1}$. 
If we take $a\equiv 1$, then  $\vec{H}(p)=\frac{1}{R}p$ and $N(p)=e_{n+1}$
$$
\frac{\partial \lambda_{t}}{\partial t}(i_{\mathbb{S}^n_+})  V =
-(1- \lambda_0)  \int_{\partial \mathbb{S}^{n}_+}   u_0^2  V_{n+1} \, dS  - \frac{n (1- \lambda_0) }{R}   \int_{\mathbb{S}^n}  u^2_0 (w) \langle w, V^{\perp}\rangle \, dv_0(w).
$$
}
\end{example}

\begin{example}[One parameter family of functions $a$]{\rm
Consider $\Omega\subset \mathbb{R}^n$, $\Mscr=\Omega\times [0,1]$ and $\Nscr=\mathbb{R}^{n+1}$. In this case $H=0$, but assume that $a$ depends on the variable $x_{n+1}$ then

$$
\begin{gathered}
\frac{\partial \lambda}{\partial h}(i_\Omega)  V = - \int_{\partial \Omega} \left(  a_\Omega(s) - \lambda_0  \right) u_0^2(s) \; (V \cdot N)(s) \, dS 
+ \int_\Omega u_0^2(x) \, D_t (h^*a_{\Omega_h}) \big|_{t=0}  dx \\ 
\qquad \qquad -2 \int_{\Omega\times[0,1]} u^2_0(w)  \langle\nabla_{w} a_0(w) ,  V^{\perp}\rangle \, dv_0(w).
\end{gathered}$$

}
\end{example}


\subsection*{Domain derivative of eigenfunctions}

Let us now determine the domain derivative of the function $u_h$ introduced by Theorem \ref{deri} at the reference manifold $\Mscr$. 

Due to \eqref{eq350}, we have for all $V \in \mathcal{X}^1(\Nscr)$ that
\begin{equation*} 
\begin{gathered}
\frac{\partial \lambda_{i_\Mscr}}{\partial t} u_0  + \lambda_0 \frac{\partial u_{i_\Mscr}}{\partial t} = 
\frac{\partial}{\partial t} \left( h^* a_{h(t,\Mscr)} \right) \Big|_{t=0} u_0 + \mathcal{B}_\Mscr \left( \frac{\partial u_{i_\Mscr}}{\partial t} \right) \\
\qquad \qquad \qquad + \left[ \mathcal{J}_\Mscr, \langle V, \nabla (\cdot) \rangle \right] u_0
- \int_{\partial \Mscr} J(y,w) u_0(w) \langle V, N \rangle \, dS(w)\\
-  \int_{\Mscr} J(y , w) u_0  \langle \vec{H}, V^{\perp}\rangle dv_0(w)
- \int_{\Mscr}u_0(w)\,  \langle\nabla_{w}( J(y , w) ),  V^{\perp}\rangle dv_0(w).
\end{gathered}
\end{equation*}
where $\left[A, B \right] u := ABu - BA u$, and then, $\left[ \mathcal{J}_\Mscr, \langle V, \nabla (\cdot) \rangle \right] u_0 = \mathcal{J}_\Mscr (\langle V, \nabla u_0 \rangle) - \langle V, \nabla (\mathcal{J}_\Mscr u_0) \rangle $. 

Hence,
\begin{equation} \label{eq588}
\begin{gathered}
(\lambda_0 - \mathcal{B}_\Mscr) \frac{\partial u_{i_\Mscr}}{\partial t} = - \frac{\partial \lambda_{i_\Mscr}}{\partial t} u_0 
+ \frac{\partial}{\partial t} \left( h^* a_{h(t,\Mscr)} \right) \Big|_{t=0} u_0 + \left[ \mathcal{J}_\Mscr, \langle V, \nabla (\cdot) \rangle \right] u_0 \\
- \int_{\partial \Mscr} J(y,w) u_0(w) \langle V, N \rangle \, dS(w)
-  \int_{\Mscr} J(y , w) u_0  \langle \vec{H}, V^{\perp}\rangle dv_0(w)\\
- \int_{\Mscr}u_0(w)\,  \langle\nabla_{w}( J(y , w) ),  V^{\perp}\rangle dv_0(w).
\end{gathered}
\end{equation}
Thus, we can conclude that the derivative of $u_h$ at $h=i_\Mscr$ in $V \in \mathcal{C}^1(\Nscr, \Nscr)$ is the solution of 
$$
(\lambda_0 - \mathcal{B}_\Mscr) w = f_V
$$
where $f_V \in L^2(\Mscr)$ is the function given by the right side of \eqref{eq588} which is well defined since $u_0$, $\lambda_0$, $\frac{\partial \lambda_{i_\Mscr}}{\partial t}$ and $\frac{\partial}{\partial t} \left( h^* a_{h(t,\Mscr)} \right) \Big|_{t=0}$ are known.

Notice that $\lambda_0$ is a simple eigenvalue of $\mathcal{B}_\Mscr$, and then, we have
$$
L^2(\Mscr) = {\rm R}(\lambda_0 - \mathcal{B}_{\Mscr}) \oplus [u_0]. 
$$
Therefore, \eqref{eq588} possesses unique solution, if and only if, $\int_\Mscr u_0 f_V dx = 0$ for each $V \in \mathcal{X}^1(\Nscr)$.

Indeed, it follows from \eqref{dode} and the assumption $\mathcal{B}_{\Mscr} u_0 = \lambda_0 u_0$ in $\Mscr$ that 
\begin{eqnarray*}
\int_\Mscr u_0 f_V dx & = & - \frac{\partial \lambda_{t}}{\partial t} 
+ \int_\Mscr D_t( h^* a_{h(t,\cdot)} ) \Big|_{t=0} u_0^2  \, dx + \int_\Mscr u_0  (u_0 \langle V, \nabla (\cdot) \rangle a_\Mscr 
+ \left[ \mathcal{J}_\Mscr, \langle V, \nabla (\cdot) \rangle \right] u_0 ) \, dx \\
& & - \int_\Mscr \int_{\partial \Mscr} J(x,z) u_0(x) \, u_0(z) \, \langle V, N \rangle \, dS(z) dx \\
& &-\int_\Mscr \int_{ \Mscr} J(x,z) u_0(x) \, u_0(z) \,\langle \vec{H}, V^{\perp}\rangle dz\, dx \, \\
& &-\int_\Mscr \int_{ \Mscr}  u_0(x) \, u_0(z) \, \langle (\nabla_x J), V^{\perp}\rangle dz \, dx\,
 \\
& = & \int_\Mscr u_0 \left(  \mathcal{J}_\Mscr(\langle V^T, \nabla u_0 \rangle ) - a_\Mscr \langle V^T, \nabla u_0 \rangle  + \lambda_0 \langle V^T, \nabla u_0 \rangle  \right) dx \\
& = & \int_\Mscr u_0 \left( \lambda_0 - \mathcal{B}_{\Mscr} \right)(\langle V^T, \nabla u_0 \rangle ) \, dx.
\end{eqnarray*}

Thus, since $\mathcal{B}_{\Mscr}$ is a self-adjoint operator, $u_0$ is a $\mathcal{C}^1$-function and $\langle V, \nabla (\cdot) \rangle u_0 \in L^2(\Mscr)$, one has   
$$
\int_\Mscr u_0 f_V dx = \int_\Mscr  \langle V, \nabla u_0 \rangle \left( \lambda_0 - \mathcal{B}_{\Mscr} \right) u_0  \, dx = 0
$$
for all $V \in \mathcal{X}^1(\Nscr)$ which proves the following result.

\begin{cor}
Let $u_h$ be the family of eigenfunctions associated with the operator $\mathcal{B}_{h(\Mscr)}$  and eigenvalues $\lambda_h$ given by Theorem \ref{deri}. 

Then, the derivative of $u_h$ at $h=i_\Mscr$ and $V \in \mathcal{X}^1(\Nscr)$ is the unique solution of 
$$
(\lambda_0 - \mathcal{B}_\Mscr) w = f_V
$$
where $f_V \in L^2(\Mscr)$ is the function given by 
$$
\begin{gathered}
f_V = - \frac{\partial \lambda}{\partial h}(i_\Mscr)  V \, u_0 
+\frac{\partial}{\partial t} \left( h^* a_{h(t,\Mscr)} \right) \Big|_{t=0} u_0 + \left[ \mathcal{J}_\Mscr, \langle V, \nabla (\cdot) \rangle \right] u_0 \\
- \int_{\partial \Mscr} J(y,w) u_0(w) \langle V, N \rangle \, dS(w)\\
-  \int_{\Mscr} J(y , w) u_0  \langle \vec{H}, V^{\perp}\rangle dv_0(w)\\
- \int_{\Mscr}u_0(w)\,  \langle\nabla_{w}( J(y , w) ),  V^{\perp}\rangle dv_0(w)
\end{gathered}
$$
with $\frac{\partial \lambda}{\partial h}(i_\Mscr)  V$ given  by \eqref{dode}.
\end{cor}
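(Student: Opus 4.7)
The plan is to start from equation \eqref{eq588}, which was derived just before the corollary and already shows that the candidate derivative $w := \left.\frac{\partial u_t}{\partial t}\right|_{t=0}$ satisfies $(\lambda_0 - \mathcal{B}_\Mscr) w = f_V$ with exactly the stated right-hand side. The two remaining tasks are to (a) confirm that this Fredholm equation is solvable, and (b) argue that the particular solution coinciding with the domain derivative is singled out uniquely.

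For solvability I would invoke self-adjointness of $\mathcal{B}_\Mscr$ together with simplicity of $\lambda_0$. These facts yield the orthogonal decomposition $L^2(\Mscr) = \operatorname{R}(\lambda_0 - \mathcal{B}_\Mscr) \oplus [u_0]$, so the equation $(\lambda_0 - \mathcal{B}_\Mscr) w = f_V$ admits a solution if and only if $\int_\Mscr u_0 f_V \, dx = 0$. To verify this orthogonality I would substitute the Hadamard formula \eqref{dode} into the first term of $f_V$, expand the commutator $[\mathcal{J}_\Mscr, \langle V, \nabla(\cdot)\rangle]\, u_0 = \mathcal{J}_\Mscr(\langle V, \nabla u_0\rangle) - \langle V, \nabla(\mathcal{J}_\Mscr u_0)\rangle$, and exploit the symmetry $J(x,w) = J(w,x)$ to repackage the $\nabla_w J$ and mean-curvature integrals as pairings against $\mathcal{J}_\Mscr u_0$. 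Using the eigenvalue relation $(a_\Mscr - \mathcal{J}_\Mscr) u_0 = \lambda_0 u_0$, together with integration by parts tangential to $\Mscr$ that produces the boundary term along $\partial \Mscr$, everything should collapse to $\int_\Mscr u_0 \,(\lambda_0 - \mathcal{B}_\Mscr)(\langle V^T, \nabla u_0\rangle) \, dx$, which vanishes by the self-adjointness of $\mathcal{B}_\Mscr$ and the identity $\mathcal{B}_\Mscr u_0 = \lambda_0 u_0$.

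Uniqueness would then be enforced by the second component of the map $F$ introduced before Theorem \ref{deri}: the eigenfunctions $u_h$ obtained via the Implicit Function Theorem are normalized by $\int_{\Mscr} u_h^2 \, dv_{g_h} = 1$. Differentiating this identity in $t$ at $t = 0$ produces a scalar constraint that pins down the $u_0$-component of $\partial u_t/\partial t$, thereby selecting one element from the affine line of Fredholm solutions and matching it with the unique $w$ produced by the Implicit Function Theorem applied to $F$.

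The hard part will be the bookkeeping in the compatibility check. The expression for $f_V$ contains six structurally different contributions (the Hadamard term, the $h^* a$-derivative, the commutator, the boundary integral weighted by $\langle V^T, N\rangle$, the mean-curvature integral in $V^\perp$, and the $\nabla_w J$ integral in $V^\perp$), and orchestrating the cancellations so that only $\langle u_0, (\lambda_0 - \mathcal{B}_\Mscr)(\langle V^T, \nabla u_0\rangle)\rangle$ survives requires simultaneously applying symmetry of $J$, tangential integration by parts on $\Mscr$, and the eigenvalue relation in the correct order. Once this algebraic organization is in place, the remainder of the argument is a routine invocation of self-adjoint Fredholm theory.
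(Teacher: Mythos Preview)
Your proposal is correct and follows essentially the same route as the paper: start from \eqref{eq588}, use the splitting $L^2(\Mscr)=\mathrm{R}(\lambda_0-\mathcal{B}_\Mscr)\oplus[u_0]$, and verify the compatibility condition $\int_\Mscr u_0 f_V\,dx=0$ by reducing it (via the Hadamard formula, symmetry of $J$, and the eigenvalue relation) to $\int_\Mscr u_0(\lambda_0-\mathcal{B}_\Mscr)(\langle V^T,\nabla u_0\rangle)\,dx$, which vanishes by self-adjointness. Your added remark that the normalization $\int_\Mscr u_h^2\,dv_{g_h}=1$ is what actually pins down the $u_0$-component of the derivative is more careful than the paper, which asserts uniqueness directly from the orthogonality condition without isolating this step.
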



\subsection*{Some examples in Euclidean spaces}

In the sequel we compute some examples assuming $\Mscr =\Omega$ is an open set in $\R^n$, $J(x,y) = J(|x-y|)$ with $\int_{\R^n} J(z) dz = 1$. 
It is worth noting that such examples often appear in the literature associated with nonlocal equations. Below we give appropriate references for each example considered.

\begin{example}[Dirichlet problem] {\rm 
If we take $a_\Omega(x) \equiv 1$ in \eqref{Bdef}, we have what is called the Dirichlet nonlocal problem.
In this case, the Hadamard formula is known and it was first obtained in \cite{GR} for the first eigenvalue. 
In \cite{RM}, we have proved that the same formula still holds for any simple eigenvalue. 
Since $a_\Omega$ is constant, $D_t (h^*a_{\Omega_h}) \big|_{t=0} = 0$ and, from Theorem \ref{deri}, we get  
$$
\frac{\partial \lambda}{\partial h}(i_\Omega) V = - \left(  1 - \lambda_0  \right) \int_{\partial \Omega} u_0^2 \; V \cdot N \, dS
\quad \forall V \in \mathcal{C}^1(\Omega, \R^n)
$$ }
\end{example}
with $\cdot$ denoting the scalar product in $\R^n$.

\begin{example}[Neumann problem] {\rm 
In the literature, see for instance \cite{ElLibro, Fife, HMMV}, the nonlocal Neumann problem is established taking 
$$
a_\Omega(x) = \int_\Omega J(|x-y|) dy, \quad x \in \R^n.
$$
As expected, zero is its first eigenvalue for any measurable open set $\Omega$ which is simple and it is associated with a constant eigenfunction. 
Clearly, the rate of the first eigenvalue with respect to the domain must be null. Let us take its rate for any other simple eigenvalue.
For this, we first compute the anti-convective derivative of $a_\Omega$ at $t=0$ assuming $h(t,x) = x + t V(x)$ 
for some $V \in \mathcal{C}^1(\Omega, \R^n)$. We have from \cite[Lemma 2.1]{Henry} and \cite[Theorem 1.1]{Henry} that 
$$
\begin{gathered}
D_t \left[ h^*(t) a_{h(t,\Omega)}  \right] \big|_{t=0} 
= h^*(t) \frac{\partial}{\partial t} \left[ \int_{h(t,\Omega)} J(|\cdot - w|) dw \right] \Big|_{t=0} \\
= \int_{\partial \Omega} J(|x-s|) (V \cdot N)(s) \, dS, \quad x \in \Omega. 
\end{gathered}
$$
Hence, we obtain from Theorem \ref{deri} that 
$$
\begin{gathered}
\frac{\partial \lambda}{\partial h}(i_\Omega) V = - \int_{\partial \Omega} \left(  a_\Omega(s) - \lambda_0  \right) u_0^2(s) \; (V \cdot N)(s) \, dS \\
+ \int_\Omega u_0^2(x) \int_{\partial \Omega} J(|x-s|) (V \cdot N)(s) \, dS dx \\
=  - \int_{\partial \Omega} \left(  a_\Omega(s) - \lambda_0  \right) u_0^2(s) \; (V \cdot N)(s) \, dS 
+ \int_{\partial \Omega} (\mathcal{J}_\Omega u_0^2)(s) (V \cdot N)(s) \, dS .
\end{gathered}
$$ 
Notice in the last integral the term $\mathcal{J}_\Omega u_0^2$ which is 
the operator $\mathcal{J}_\Omega$ applied to the square of the normalized eigenfunction $u_0$. 

}
\end{example}

\begin{example} {\rm 
Let $D \subset \R^n$ be a bounded open set and take $A \subset D$, another open bounded set strictly contained in $D$ 
in such way that $\partial A \cap \partial D = \emptyset$. 
Next, consider $\Omega = D \setminus A$ defining  
$$
a_\Omega(x) = \int_{\R^n \setminus A} J(|x-y|) \, dy, \quad x \in \R^n.
$$
The nonlocal operator $\mathcal{B}_\Omega$ given for such function $a_\Omega$ is a kind of Dirichlet/Neumann problem. 
It takes Dirichlet boundary condition side out of $D$ setting Neumann condition on the hole $A$. 
Such operator is given by 
$$
\mathcal{B}_\Omega(x) = \int_{\R^n \setminus A} J(|x-y|) (u(x) - u(y))\, dy, \quad x \in \Omega,
$$
assuming $u \equiv 0$ in $\R^n \setminus D$ and has been studied for instance in \cite{PR}. 
Let us compute its Hadamard formula. Due to  
$$
\begin{gathered}
a_\Omega(x) = \int_{\R^n} J(|x-y|) dy - \int_A J(|x - y|) \, dy \\
= 1 - \int_D J(|x - y|) \, dy + \int_\Omega J(|x - y|) \, dy, \quad \forall x \in \R^n,
\end{gathered}
$$
one gets again from \cite[Lemma 2.1]{Henry} and \cite[Theorem 1.1]{Henry} that 
$$
\begin{gathered}
D_t \left[ h^*(t) a_{h(t,\Omega)}  \right] \big|_{t=0} 
= h^*(t) \frac{\partial}{\partial t} \left[1 - \int_{h(t,D)} J(|\cdot - y|) \, dy + \int_{h(t,\Omega)} J(|\cdot - y|) \,  dy \right] \Big|_{t=0} \\
= - \int_{\partial D} J(|x-s|) (V \cdot N)(s) \, dS + \int_{\partial \Omega} J(|x-s|) (V \cdot N)(s) \, dS \\
= \int_{\partial A} J(|x-s|) (V \cdot N)(s) \, dS, \quad x \in \Omega
\end{gathered}
$$
since $\partial \Omega = \partial D \cup \partial A$ with $\partial D \cup \partial A = \emptyset$.
Hence,
$$
\begin{gathered}
\frac{\partial \lambda}{\partial h}(i_\Omega)  V = - \int_{\partial \Omega} \left(  a_\Omega(s) - \lambda_0  \right) u_0^2(s) \; (V \cdot N)(s) \, dS \\
+ \int_{\partial A}  (\mathcal{J}_\Omega u_0^2)(s) (V \cdot N)(s) \, dS.
\end{gathered}
$$
}
\end{example}


\section{Isoperimetric inequalities for eigenvalues} \label{iso}

In this section, we obtain an analogue of the Rayleigh-Faber-Krahn inequality for the operator $\mathcal{B}_\Mscr$ 
assuming $\Mscr =\Omega$ is an open set in $\R^n$ and the function $J$ satisfies  
$$
{\bf (H)} \qquad 
\begin{gathered}
J \in \mathcal{C}(\R^n,\R) \textrm{ is a nonnegative function, spherically symmetric and radially decreasing } \\ 
\textrm{ with } J(0)>0 \textrm{ and } 
\int_{\R^n} J(x) \, dx = 1.
\end{gathered}
$$
As we will see, it is a direct consequence of rearrangements (or Schwarz symmetrization) first introduced by Hardy and Littlewood \cite{HL}.
In the following we recall some basic definitions and properties concerning spherically symmetric rearrangements.
We mention \cite{Beng}, as well as \cite{CB, Henrot}, for more detailed discussions and proofs concerning this subject. 

Let $\Omega \subset \R^n$ be a measurable set and $|\Omega|$ its Lebesgue measure. If $|\Omega|$ is finite, 
we denote by $\Omega^*$ an open ball with the same measure as $\Omega$, otherwise, we write $\Omega^* = \R^n$. 
We consider $u:\Omega \mapsto \R$ a measurable function assuming either that $|\Omega|$ is finite or that $u$ decays at infinity, i.e., the set
$|\{ x \in \Omega \; : \; |u(x)| > t \}|$ is finite for all $t>0$.

The function $\mu(t) = |\{ x \in \Omega \; : \; |u(x)|>t \}|$ defined for $ t \geq 0$ is called the distribution function of $u$.
It is non-increasing, right-continuous with $\mu(0) = |{\rm supp}(u)|$ and ${\rm supp}(\mu) = [0, \| u \|_{L^\infty(\Omega)})$. 

The decreasing rearrangement $u^{\#}: \R^+ \mapsto \R^+$ of $u$ is the distributional function of $\mu$, 
and it can be used to set the decreasing symmetric rearrangement $u^*:\Omega \mapsto \R^+$ of $u$ which is defined by 
$$u^*(x) = u^{\#}(c_n |x|^n)$$ 
where the constant $c_n = \pi^{n/2} \left( \Gamma(n/2 +1) \right)^{-1}$ is the measure of the $n$-dimensional unit ball. 

It follows from \cite[Lemma 3.4]{Beng} that $u^*$ is spherically symmetric and radially decreasing. 
Also, the measure of the level set $\{ x \in \Omega^* \; : \; u^*(x) > t \}$ is  the same as the measure of $\{ x \in \Omega \; : \; |u(x)| > t \}$ for any $t \geq 0$. 

Quite analogous to the decreasing rearrangements are the definitions of increasing ones. 
If the measure of $\Omega$ is finite, we set by $u_{\#}(s) = u^{\#}(|\Omega|- s)$ the increasing rearrangement of $u$.
Hence, the symmetric increasing rearrangement $u_*:\Omega^* \mapsto \R^+$ of $u$ is defined by 
$$u_*(x) = u_{\#}(c_n |x|^n).$$

Due to the symmetry condition imposed on the kernel $J$, we can show an analogue of the Rayleigh-Faber-Krahn inequality for the operator 
$\mathcal{B}_\Omega$ assuming $a_\Omega$ is a non-negative function. 
In this way, we improve previous results obtained in \cite{RM, RS} for the Dirichlet nonlocal problem and the compact operator $\mathcal{J}_\Omega$.
We show that the first eigenvalue of $\mathcal{B}_\Omega$ possesses as a lower bound, the first eigenvalue of the following self-adjoint operator: 
$\mathcal{B}^*_{\Omega^*}: L^2(\Omega^*) \mapsto L^2(\Omega^*)$ given by 
\begin{equation} \label{B*}
\mathcal{B}^*_{\Omega^*} u(x)  = {a_*}_{\Omega^*}(x) u(x) - \int_{\Omega^*} J(x-y) u(y) \, dy, \quad x \in \Omega^*
\end{equation}
where the function ${a_*}_{\Omega^*}$ is the symmetric increasing rearrangement of $a_\Omega$.
It is a consequence of the Riesz rearrangement inequality proved in \cite[Symmetrization Lemma]{WB}. It is known that 
\begin{equation} \label{eq236}
\int_{\R^n} \int_{\R^n} f(y) g(y-x) h(x) dx dy \leq \int_{\R^n} \int_{\R^n} f^*(y) g^*(y-x) h^*(x) dx dy
\end{equation}
for any nonnegative measurable functions $f$, $g$ and $h$ defined in $\R^n$.

\begin{theorem} \label{theolamb}
Let $\Omega \subset \R^n$ be an open bounded set, $a_\Omega: \overline{\Omega} \mapsto \R^+$ a non-negative continuous function 
and $\Omega^*$ an open ball with the same measure as $\Omega$. 
Assume that there exist the first eigenvalues of $\mathcal{B}_\Omega$ and $\mathcal{B}^*_{\Omega^*}$ denoted respectively by $\lambda_1(\Omega)$ and $\lambda_1^*(\Omega^*)$. 

Then, under conditions $(\rm{H})$, we have that 
$$
\lambda_1(\Omega) \geq \lambda_1^*(\Omega^*).
$$
\end{theorem}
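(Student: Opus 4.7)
The plan is to use the variational characterization of the principal eigenvalue together with the Hardy--Littlewood and Riesz rearrangement inequalities. As noted after \eqref{eigenexp}, the principal eigenvalue of the self-adjoint operator $\mathcal{B}_\Omega$ is simple, lies below the essential spectrum, and is attained by a continuous, strictly positive eigenfunction $u_1$. Standard self-adjoint spectral theory then gives
$$
\lambda_1(\Omega) \;=\; \inf_{\substack{u\in L^2(\Omega)\\ \|u\|_{L^2(\Omega)}=1}} \left( \int_\Omega a_\Omega(x) u(x)^2\,dx \;-\; \int_\Omega\!\!\int_\Omega J(x-y) u(x) u(y)\,dx\,dy \right),
$$
and the infimum is realized by the normalized positive principal eigenfunction $u_1$. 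An analogous variational formula characterizes $\lambda_1^*(\Omega^*)$ for $\mathcal{B}^*_{\Omega^*}$.

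Next, I would extend $u_1$ by zero to all of $\R^n$ and form its symmetric decreasing rearrangement $u_1^*$, which is supported in $\Omega^*$ and satisfies $\|u_1^*\|_{L^2(\Omega^*)}=\|u_1\|_{L^2(\Omega)}=1$. The two terms in the Rayleigh quotient are then estimated separately. For the multiplicative term, I apply the Hardy--Littlewood inequality to the pair $(a_\Omega, u_1^2)$: since $u_1\geq 0$, one has $(u_1^2)^*=(u_1^*)^2$, and pairing the increasing rearrangement of $a_\Omega$ with the decreasing rearrangement of $u_1^2$ yields the lower bound
$$
\int_\Omega a_\Omega(x) u_1(x)^2\,dx \;\geq\; \int_{\Omega^*} {a_*}_{\Omega^*}(x) u_1^*(x)^2\,dx.
$$
For the bilinear term I invoke the Riesz rearrangement inequality \eqref{eq236} with $f=h=u_1$ (extended by zero) and $g=J$; because $J$ is already spherically symmetric and radially decreasing, $J^*=J$, and
$$
\int_\Omega\!\!\int_\Omega J(x-y) u_1(x) u_1(y)\,dx\,dy \;\leq\; \int_{\Omega^*}\!\!\int_{\Omega^*} J(x-y) u_1^*(x) u_1^*(y)\,dx\,dy.
$$

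Combining both bounds gives
$$
\lambda_1(\Omega) \;\geq\; \int_{\Omega^*} {a_*}_{\Omega^*}(x) u_1^*(x)^2\,dx - \int_{\Omega^*}\!\!\int_{\Omega^*} J(x-y) u_1^*(x) u_1^*(y)\,dx\,dy \;=\; \langle \mathcal{B}^*_{\Omega^*} u_1^*, u_1^*\rangle_{L^2(\Omega^*)},
$$
and since $u_1^*$ is admissible in the Rayleigh quotient for $\mathcal{B}^*_{\Omega^*}$, the right-hand side is bounded below by $\lambda_1^*(\Omega^*)$, completing the proof.

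The main subtlety I anticipate is justifying the Rayleigh-quotient characterization of the principal eigenvalue in a setting where $\mathcal{B}_\Omega$ is not compact and its spectrum contains a nontrivial essential part $[m,M]$. The cited results (in particular that the principal eigenvalue is isolated, simple, and associated with a positive eigenfunction) are exactly what is needed: positivity of $u_1$ lets us identify $(u_1^2)^* = (u_1^*)^2$, while the assumption $a_\Omega\geq 0$ ensures the Hardy--Littlewood step actually goes in the favorable direction. Everything else is a direct bookkeeping application of the two rearrangement inequalities stated at the start of Section~\ref{iso}.
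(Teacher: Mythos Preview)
Your proof is correct and follows essentially the same route as the paper: both arguments split the Rayleigh quotient into the multiplicative term and the integral term, apply the Hardy--Littlewood-type inequality \eqref{eq248} to the former and the Riesz rearrangement inequality \eqref{eq236} to the latter, and then use $\|u_1^*\|_{L^2(\Omega^*)}=\|u_1\|_{L^2(\Omega)}$ together with the variational characterization of $\lambda_1^*(\Omega^*)$ to conclude. Your explicit remarks on why positivity of $u_1$ gives $(u_1^2)^*=(u_1^*)^2$ and why the variational formula is valid despite the essential spectrum are welcome clarifications that the paper leaves implicit.
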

\begin{proof}
First, let us recall that \cite[Theorem 3.8]{Beng} implies that 
\begin{equation} \label{eq248}
\int_\Omega \phi(x) \varphi(x) \, dx \geq \int_{\Omega^*} \phi_*(x) \varphi^*(x) \, dx
\end{equation}
for any non-negative functions $\phi$ and $\varphi$ defined on $\Omega \subset \R^n$. 
Consequently, it follows from \eqref{eq248} and the Riesz rearrangement inequality \eqref{eq236} that 
\begin{eqnarray*}
\int_\Omega u(x) \, (\mathcal{B}_\Omega u)(x) \, dx & = & 
\int_\Omega a_\Omega(x) u^2(x) \, dx - \int_\Omega \int_\Omega J(x-y) u(y) u(x) \, dy dx  \\
& \geq & \int_{\Omega^*} {a_*}_{\Omega^*}(x) {u^*}^2(x) \, dx - \int_{\Omega^*} \int_{\Omega^*} J(x-y) u^*(y) u^*(x) \, dy dx \\
& \geq & \int_{\Omega^*} u^*(x) \, (\mathcal{B}^*_{\Omega^*} u^*)(x) \, dx
\end{eqnarray*}
since $J$ is nonnegative, spherically symmetric and radially decreasing.

Now, due to \cite[Theorem 3.6]{Beng}, we know that 
$$\| u^* \|_{L^2(\Omega^*)} = \| u \|_{L^2(\Omega)}$$ 
for any nonnegative function $u$. 
Thus, if $u_1$ is the corresponding eigenfunction of $\lambda_1(\Omega)$, one has 
$$
\lambda_1(\Omega) \geq \int_{\Omega^*} u_1^*(x) \, (\mathcal{B}^*_{\Omega^*} u_1^*)(x) \, dx \geq \lambda_1^*(\Omega^*)
$$
completing the proof. 
\end{proof}

\begin{remark} {\rm 
We recall that, under appropriate conditions, the existence of the first eigenvalue $\lambda_1(\Omega)$ of $\mathcal{B}_\Omega$ 
is guaranteed by \cite[Theorem 2.1]{LCW}. In particular, $\lambda_1(\Omega)$ exists if $a_\Omega$ satisfies  
\begin{equation} \label{eq272}
\int_\Omega \frac{dx}{a_\Omega(x) - m} = \infty
\end{equation}
with $m = \min_{x \in \overline \Omega} a_\Omega(x)$. 
Now, we known from Proposition \ref{dfun} (which is a consequence of the layer-cake formula) that 
\begin{equation} \label{eq277}
\int_\Omega \Phi(u(x)) \, dx = \int_{\Omega^*} \Phi(u_*(x)) \, dx
\end{equation}
for any non-negative measurable function $u$ and any decreasing function $\Phi$ satisfying 
\begin{equation} \label{eq694}
\lim_{t\to \infty} \Phi(t)=0 \quad \textrm{ and } \quad \lim_{t\to a+} \Phi(t)=\infty.
\end{equation} 
Therefore, since $\Phi(x) = (x-m)^{-1}$ is a non-negative decreasing function on $(m, +\infty)$ satisfying \eqref{eq694}, 
we obtain from \eqref{eq272} and \eqref{eq277} that
$$
\int_{\Omega^*} \frac{dx}{{a_*}_\Omega(x) - m} = \infty.
$$
Thus, it follows from \cite[Theorem 2.1]{LCW} that the first eigenvalue $\lambda^*_{\Omega^*}$ of \eqref{B*} also exists ensuring the application of the 
isoperimetric inequality given by Theorem \ref{theolamb} to a large class of nonlocal operators $\mathcal{B}_\Omega$. }
\end{remark}

Notice that the ball is not the unique minimizer of $\lambda_1(\Omega)$ even up to displacements. Indeed, since $L^2(\Omega)$ does not change 
if we remove from $\Omega$ a set of zero measure, any kind of open sets as $\Omega^* \setminus A$ with $|A|=0$ gives a minimizer for $\lambda_1(\Omega)$.

As we have already mentioned, the operators $\mathcal{B}_\Omega$ and $\mathcal{B}^*_{\Omega^*}$ can be introduced by a jump process 
used to model dispersion of individuals in a given habitat. In fact, if $u(x,t)$ is thought of as a population density at a point $x$ and a time $t$, 
and $J(x-y)$ is the probability distribution of jumping from a location $y$ to the position $x$, the amount $\int_\Omega J(x-y) \, u(y,t) \, dy$ gives 
the rate in which individuals are arriving at location $x$ from all the other places $y \in \Omega$. On the other hand, $- a_\Omega(x) u(x,t)$ 
can be thought of as the rate in which individuals are leaving position $x$ to the others sites in the habitat. Therefore, in the absence of external 
or internal sources, we have that the density $u$ satisfies the evolution equation $u_t(x,t) = - \mathcal{B}_\Omega u(x,t)$, $x \in \Omega$.
Hence, it follows from Theorem \ref{theolamb} that the minimum decay rate of the population density $u(x,t)$ is attained when the habitat is a ball.

Under a Neumann condition, i.e., assuming $a_\Omega(x) = \int_\Omega J(x-y) \,dy$ in the definition of $\mathcal{B}_\Omega$, it is clear that 
zero is the lower bound for the first eigenvalues since it is the principal eigenvalue of $\mathcal{B}_\Omega$ for any bounded open set $\Omega \subset \R^n$. 
Anyway, as we have ${a_*}_{\Omega^*}(x) = a_{\Omega^*}(x)$ in $\Omega^*$, we also recover this obvious property using Theorem \ref{theolamb}.

Finally, we notice that in general, the first eigenvalue $\lambda_1(\Omega)$ of \eqref{eigeneq} does not have a maximizer 
among open bounded sets with constant measure. For the Dirichlet problem, i.e., under the assumption $a_\Omega(x) \equiv 1$ 
for all $x \in \Omega$, this has been pointed out in \cite[Remark 4.2]{RM}. Other examples can be obtained in a very similar way.

\section{Appendix}

Here, we see that the integral of the absolute value of functions is invariant under rearrangement. We have:
\begin{prop} \label{dfun}
Let $\Phi:(a,+\infty) \subset \R^+ \mapsto \R^+$ be a continuous increasing map satisfying 
$$
\lim_{t\to \infty} \Phi(t)=0 \quad \textrm{ and } \quad \lim_{t\to a+} \Phi(t)=\infty.
$$

Then, 
$$
\int_{\Omega^*} \Phi(u^*) dx = \int_\Omega \Phi(|u|) dx = \int_{\Omega^*} \Phi(u_*) dx.
$$
\end{prop}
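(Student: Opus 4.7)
The plan is to reduce the two equalities in the statement to the fact that $|u|$, $u^*$, and $u_*$ are \emph{equimeasurable}, i.e., they share the same distribution function. Once this is established, both equalities follow immediately from the general principle that $\int \Phi(f)\,dx$ depends on $f$ only through its distribution function whenever $\Phi$ is Borel measurable and nonnegative. (We also note that the hypothesis ``increasing'' is incompatible with the stated limits; in what follows we read $\Phi$ as continuous and strictly decreasing, as is the case of the intended application $\Phi(x)=(x-m)^{-1}$.)

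First, I would record the equimeasurability. For $u^*$ this is essentially built into the definition: $u^{\#}$ is the right-continuous inverse of the distribution function of $|u|$ on $[0,|\Omega|]$, hence $|\{s\in[0,|\Omega|]:u^{\#}(s)>t\}|=|\{x\in\Omega:|u(x)|>t\}|$ for every $t\geq 0$, and the change of variables $s=c_n|x|^n$ transports this to $u^*$ on $\Omega^*$. For $u_*$, observe that $u_{\#}(s)=u^{\#}(|\Omega|-s)$ is obtained from $u^{\#}$ by the measure-preserving affine reflection $s\mapsto |\Omega|-s$ of $[0,|\Omega|]$, which leaves the distribution function unchanged; the same change of variables then yields that $u_*$ and $|u|$ are equimeasurable as well.

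Next I would apply the layer-cake representation. For every nonnegative measurable $f$,
\begin{equation*}
\int \Phi(f(x))\,dx=\int_{0}^{\infty}|\{x:\Phi(f(x))>s\}|\,ds.
\end{equation*}
Because $\Phi$ is a continuous strictly decreasing bijection $(a,\infty)\to(0,\infty)$, the set $\{\Phi(f)>s\}$ coincides with $\{f<\Phi^{-1}(s)\}$, whose measure is a functional of the distribution function of $f$ only. Applying this to $f=|u|$, $f=u^*$, and $f=u_*$, the integrals coincide by the equimeasurability established above, which gives the two claimed equalities.

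The main (minor) obstacle is just the bookkeeping for $u_*$: the decreasing case is standard and appears already in the rearrangement literature cited in the paper, while the increasing case needs the reflection argument sketched above. A secondary point requiring care is that $\Phi$ may blow up near $t=a$, so both sides of the asserted equality can be $+\infty$; the layer-cake argument, however, is valid as an identity in $[0,+\infty]$, so no extra hypothesis is needed beyond what is stated.
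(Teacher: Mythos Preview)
Your proposal is correct and follows essentially the same route as the paper: both arguments rest on the layer-cake formula together with the equimeasurability of $|u|$, $u^*$ and $u_*$. The only cosmetic difference is that the paper writes $\Phi(t)=\nu([0,t)^c)$ for a suitable Borel measure $\nu$ and invokes layer-cake in the form $\int \Phi(f)\,dx=\int_0^\infty |\{f\le s\}|\,d\nu(s)$, whereas you apply layer-cake directly to the nonnegative function $\Phi\circ f$ and then translate the level sets via $\Phi^{-1}$; these are equivalent formulations. Your observation that the word ``increasing'' in the hypothesis must be read as ``decreasing'' (consistently with the stated limits and with the intended application $\Phi(x)=(x-m)^{-1}$) is also correct.
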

\begin{proof}
It is a direct consequence of the layer-cake formula given for instance at \cite[Theorem 10.1]{Beng}. We choose $m(dx)=dx$ setting $\Phi(t) = \nu([0,t)^c)$.
\end{proof}

\begin{remark}
An analogous result holds if we assume that $\Phi$ is increasing and satisfies $\Phi(0)=0$. See \cite[Theorem 3.6]{Beng}.
\end{remark}

\vspace{0.5 cm}

{\bf Acknowledgements.} 
RB has been supported by Fondecyt (Chile)
Project  \# 120--1055, MCP by CNPq 308950/2020-8, and FAPESP 2020/14075-6 and 2020/04813-0 (Brazil); MS has been supported by Fondecyt Regular \#1190388. Finally, 
we would like to mention that this work was partially done while MCP was visiting the Instituto de F\'isica at P. Universidad Cat\'olica de Chile. 
He kindly expresses his gratitude to the institute.

\end{document}